\documentclass[a4paper,11pt]{amsart}
\usepackage[english]{babel}
\usepackage{amsmath,amsthm}
\usepackage{amsfonts}
\usepackage{enumerate}
\usepackage{graphicx}
\usepackage{color}
\usepackage[all]{xy}
\usepackage{thmtools, thm-restate}
\usepackage{overpic}
\usepackage{amssymb,tikz}
\usepackage{quiver}
\usepackage{enumerate}
\usepackage[dvipsnames]{xcolor}
\usepackage[pagebackref,hypertexnames=false,colorlinks=true,
            linkcolor=NavyBlue,
            citecolor=NavyBlue,
            urlcolor=NavyBlue]{hyperref} 
\usepackage{cleveref}
\usepackage{longtable}
\usepackage{array}
\newcolumntype{Y}{>{\centering\arraybackslash}p{1.2cm}}

\usepackage[top=.9in, bottom=.9in, left=1in, right=1in]{geometry}

\newtheorem{thm}{Theorem}%[section]
\newtheorem{cor}[thm]{Corollary}

\newtheorem{prop}[thm]{Proposition}

\newtheorem{claim}{Claim}

\theoremstyle{definition}

\theoremstyle{remark}

\numberwithin{equation}{section}
\newcommand{\wt}[1]{\widetilde{ #1}}

\newcommand{\spin}{\ifmmode{\rm Spin}\else{${\rm spin}$\ }\fi}
\newcommand{\spinc}{\ifmmode{{\rm Spin}^c}\else{${\rm spin}^c$}\fi}

\newcommand{\Z}{\mathbb{Z}}

\newcommand{\CP}{\mathbb{CP}}
\newcommand{\rk}{{\rm rk}\,}

\newcommand{\plumb}{\entrymodifiers={+[o][F-]} \xymatrix@C=10pt}
\newcommand{\el}{\ar@{-}[r]}
\newcommand{\ed}{\ar@{--}[r] }

\begin{document}

\title{Special alternating links of minimal unlinking number}%

% \author{Paolo Aceto}%
% \address {Université de Lille}
% \email{paoloaceto@gmail.com }

\author{Duncan McCoy}%
\address {Université du Québec à Montréal}
\email{mc\_coy.duncan@uqam.ca}

\author{JungHwan Park}%
\address {Korea Advanced Institute of Science and Technology}
\email{jungpark0817@kaist.ac.kr}
%\date{\today}%

\begin{abstract}
For any link in the 3-sphere, there is a natural lower bound for the unlinking number in terms of the classical signature. We prove that if this lower bound is sharp for a special alternating link $L$, then the unlinking number of $L$ is necessarily realized by crossing changes in any alternating diagram for $L$. As an application, we compute new values of the unknotting numbers for some special alternating knots with crossing number 11 and 12.
\end{abstract}
\maketitle

%\tableofcontents
% ----------------------------------------------------------------

\section{Introduction}
Given a link $L$ in the 3-sphere $S^3$, its unlinking number $u(L)$ is defined to be the minimal number of crossing changes required in any diagram of $L$ to obtain an unlink. Despite the simple definition, this invariant is often rather challenging to compute since one typically does not know which diagrams will realize the unlinking number. Indeed results which predict exactly where the unknotting or unlinking numbers will be realized are rare. For example, among the few general results available, it is known that alternating knots with unknotting number one can always be unknotted by a single crossing change in any alternating diagram\footnote{Recall that a reduced alternating diagram is \emph{minimal}, meaning that it realizes the crossing number.} \cite{McCoy2017alternating} and 2-bridge links with unlinking number one can always be unlinked with a single crossing change in any alternating diagram \cite{Kohn1991unlinking_2bridge} (see also \cite{Kanenobu-Murakami:1986, Gordon-Luecke:2006, Greene:2014}). However, higher unknotting numbers can not necessarily be computed using minimal diagrams, even for alternating knots \cite{Bleiler1984note, Bernhard1994unknotting, Brittenham2021BJ}. The recent striking examples of Brittenham and Hermiller suggest that the unknotting number is far more subtle than was previously imagined  \cite{Brittenham2025, Brittenham2026}.

For every oriented non-split $k$-component alternating link $L$ there is a lower bound for the unlinking number of the form 
\begin{equation}\label{eq:alt_sig_bound}
u(L)\geq \frac{|\sigma(L)|+k-1}{2},
\end{equation}
where $\sigma(L)$ denotes the classical signature.\footnote{Links in this paper are always assumed to be oriented.} The main result of this paper is to show that if $L$ is a special alternating link for which this lower bound is sharp, then the unlinking number is realized by crossing changes in any alternating diagram for $L$. Recall that a special alternating link is one admitting an alternating diagram in which one of the planar spanning surfaces is a Seifert surface for $L$. We obtain a similar result for the 4-ball crossing number $c_4(L)$ of a special alternating link. By definition, $c_4(L)$ is the minimal number of transverse double points among all properly immersed unions of disks in the $4$-ball $B^4$ with boundary $L$. %The classical signature of $L$ is denoted by $\sigma(L)$. 
 
\begin{thm}\label{thm:main}
    Let $L$ be an oriented non-split special alternating link with $k$ components. Then the following are equivalent:
    \begin{enumerate}[(i)]
        \item\label{it:unlinking} $u(L)=\frac{|\sigma(L)|+k-1}{2}$;
        \item\label{it:c4} $c_4(L)=\frac{|\sigma(L)|+k-1}{2}$;
        \item\label{it:crossing_changes} $L$ can be unlinked by $\frac{|\sigma(L)|+k-1}{2}$ crossing changes in any alternating or minimal diagram.
    \end{enumerate}
\end{thm}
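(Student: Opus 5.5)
Writing $n=\frac{|\sigma(L)|+k-1}{2}$, we prove the implications (iii)$\Rightarrow$(i)$\Rightarrow$(ii)$\Rightarrow$(iii). The first two are formal. If $L$ is unlinked by $n$ crossing changes in some diagram, then $u(L)\le n$. Combined with \eqref{eq:alt_sig_bound} this gives (i). For (i)$\Rightarrow$(ii), any sequence of $u(L)$ crossing changes produces an immersed union of disks in $B^4$: trace the homotopy in a collar and cap the resulting unlink with disjoint disks. Hence $c_4(L)\le u(L)=n$. For the reverse inequality we need $c_4(L)\ge n$. This is the 4-dimensional version of the signature bound. We obtain it by resolving each double point of an immersed disk collection. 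Each self-intersection resolves to a surface of genus one higher, and each intersection between different components tubes two components together. We then apply the Murasugi--Tristram inequality $|\sigma(L)|\le 2g+ (k-c)$ to the resulting connected surfaces of total genus $g$ with $c$ components. A bookkeeping argument then yields $|\sigma(L)|+k-1\le 2c_4(L)$, provided one tracks connectivity; to connect all $k$ components we may need an intersection between distinct components for each merge.

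The substance is (ii)$\Rightarrow$(iii). Fix an alternating diagram $D$ of $L$; since $L$ is special alternating, $D$ is also special, and without loss of generality we take the positive definite Goeritz lattice $\Lambda_D$, which is the Seifert form symmetrized. Since $D$ is special, its Goeritz form $\Lambda_D$ has rank equal to $|\sigma(L)|$, up to orientation conventions. Now suppose $L$ bounds immersed disks $\Delta\subset B^4$ with $n$ double points. Blow up at the double points, or equivalently take the branched double cover $\Sigma(L)$ bounding the double branched cover $X$ of $B^4$ over $\Delta$ after a tubing/twisting modification as in the unknotting-number-one arguments of Cochran--Lickorish/Ozsváth--Szabó/McCoy. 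This produces a definite 4-manifold $Z$ with $\partial Z=\Sigma(L)$ and $b_2(Z)=2n-(k-1)+\ldots$. Each crossing change contributes a class of square $\pm2$, roughly. Gluing $Z$ to the negative of the Goeritz 4-manifold $X_D$ (the double branched cover of $B^4$ over the pushed-in spanning surface) gives a closed negative definite manifold, hence by Donaldson a diagonal lattice. We obtain an embedding $\Lambda_D\oplus(\text{crossing-change lattice})\hookrightarrow \Z^{N}$ of codimension governed by $n$. Equality $c_4=n$ is exactly what forces the codimension to be minimal. The same lattice-theoretic setup as in \cite{McCoy2017alternating} extends here, since special alternating means one Goeritz lattice is a graph lattice of a planar graph whose complementary (Seifert) side is trivial.

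The main obstacle is the combinatorial step: showing that such a tight embedding of the graph lattice $\Lambda_D$ forces a set of $n$ crossings in $D$ whose changing yields an unlink. My plan here is to analyze the orthogonal complement, which is spanned by vectors of norm $2$ coming from the double points. Such norm-$2$ vectors $e_i\pm e_j$ pair with vertex classes of the Tait graph in a controlled way. I would identify each with an edge, or a set of parallel edges, of the Tait graph, that is, a crossing of $D$. Then I would show that changing these crossings produces a diagram whose Goeritz matrix is unimodular-diagonal of rank $k-1$, via the $\Lambda_D$ changes under crossing change. That forces determinant $1$ with the right number of components, and then an alternating-type argument shows the result is an unlink. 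The argument should be independent of the chosen alternating diagram, which gives "any alternating diagram". Any minimal diagram of a non-split alternating link is alternating by Tait's conjecture. Equivalently, all reduced alternating diagrams are related by flypes, and a flype carries crossing sets to crossing sets, so this handles the minimal case too.
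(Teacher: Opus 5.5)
\textbf{There is a genuine gap in (ii)$\Rightarrow$(iii): the idea that makes the lattice embedding rigid is missing.}

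\emph{The missing spin input.} In the paper the branched double cover $W$ is \emph{spin}. Equality in the signature bound forces all $p$ double points to have the same sign. After blowing up, the disks $\Delta\subset B^4\#_p\overline{\CP}^2$ represent twice the sum of the exceptional classes, so $[\Delta]/2$ is characteristic. Hence $Q_{-W}$, which is the orthogonal complement of $\Lambda_D$ in $\Z^n$, is even. This yields two facts you have no substitute for:
\begin{enumerate}[(a)]
\item the complement has no unit vectors, so $\Lambda_D$ pairs nontrivially with \emph{every} basis vector;
\item the $p$ orthogonal norm-two classes then have disjoint supports, $u_i=e_i-e_{i+p}$.
\end{enumerate}
Without evenness the complement may contain unit vectors and overlapping vectors $e_a\pm e_b$, and nothing about $D$ follows. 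Your ``minimal codimension'' does not encode this. Note also that the complement has rank $|\sigma(L)|$ and is not spanned by the $p$ norm-two vectors.

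\emph{The wrong lattice.} $\Lambda_D$ must be the Goeritz form of the non-Seifert checkerboard surface, of rank $c(D)-|\sigma(L)|$. It is not the symmetrized Seifert form of rank $|\sigma(L)|$: that form has the same definiteness as $W$, so the glued manifold would be indefinite and Donaldson's theorem would not apply. The correct count $\rk\Lambda_D+|\sigma(L)|=c(D)$ is what makes the combinatorics tight. Write $v_j$ for the white regions. From $\sum_j v_j=0$ and (a), each $e_i$ satisfies $\sum_j|v_j\cdot e_i|\ge 2$. On the other hand $2c(D)=\sum_j v_j\cdot v_j\ge\sum_{i,j}|v_j\cdot e_i|$. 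So each $e_i$ appears in exactly two regions, with coefficients $\pm1$, and coordinates correspond bijectively to crossings. Then $v\cdot e_i=v\cdot e_{i+p}$ says $e_i,e_{i+p}$ are crossings between the same two white regions. After flyping $D$ to be twist-reduced, this gives $p$ disjoint clasps.

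\emph{The final step fails as stated.} Determinant or Goeritz data cannot certify an unlink: there are nontrivial knots of determinant one, and the $k$-component unlink has determinant $0$ for $k\ge 2$. Also, after crossing changes the diagram is no longer alternating, so no alternating rigidity argument is available. The paper uses the Seifert surface instead. Changing one crossing of a clasp between white regions lets you cancel the clasp by a Reidemeister~II move, which deletes two bands from the white surface $S_-$. Since $\chi(S_-)=1+\sigma(L)=k-2p$, deleting $2p$ bands leaves a surface with Euler characteristic $k$ and $k$ boundary components, hence $k$ disks.

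\emph{Two smaller points.}
\begin{itemize}
\item For $c_4(L)\ge\frac{|\sigma(L)|+k-1}{2}$, cite the nullity-refined bound $c_4(L)\ge\frac{|\sigma(L)|-\eta(L)+k-1}{2}$ with $\eta(L)=0$. Your resolution bookkeeping with $|\sigma(L)|\le b_1$ only gives $2c_4(L)\ge|\sigma(L)|+k-c$ when the resolved surface has $c$ components.
\item Tait's conjecture does not make minimal diagrams of composite alternating links alternating; they need not be. You need the fact that such diagrams are connected sums of reduced alternating diagrams of the prime factors.
\end{itemize}
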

Although every minimal diagram for a prime alternating link is alternating \cite{Kauffman1987, Murasugi1987Jones, Thistlethwaite1987}, non-prime alternating links admit non-alternating minimal diagrams. Hence there is no redundancy in the inclusion of both alternating and minimal diagrams in Theorem~\ref{thm:main}\eqref{it:crossing_changes}.

The most interesting applications of Theorem~\ref{thm:main} concern the unknotting number.
\begin{cor}\label{cor:unknotting}
    Let $K$ be a special alternating knot. Then $u(K)=|\sigma(K)|/2$ if and only if $K$ can be unknotted by $|\sigma(K)|/2$ crossing changes in any alternating or minimal diagram. \qed
\end{cor}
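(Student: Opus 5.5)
The corollary is the case $k=1$ of Theorem~\ref{thm:main}, specialized to the equivalence \eqref{it:unlinking}$\Leftrightarrow$\eqref{it:crossing_changes}. The plan is to check that the hypotheses match and that the numerical quantity specializes correctly.

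First, a knot $K$ is an oriented link with $k=1$ component. It is automatically non-split, since a one-component link cannot be split. So a special alternating knot satisfies the hypotheses of Theorem~\ref{thm:main}. With $k=1$, the quantity $\frac{|\sigma(L)|+k-1}{2}$ becomes $|\sigma(K)|/2$. The unlinking number of a knot is, by definition, its unknotting number: the target unlink is the one-component unlink, i.e.\ the unknot. Likewise, "unlinked by crossing changes" means "unknotted by crossing changes."

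Substituting these into the statement, \eqref{it:unlinking} reads $u(K)=|\sigma(K)|/2$. Item \eqref{it:crossing_changes} reads that $K$ can be unknotted by $|\sigma(K)|/2$ crossing changes in any alternating or minimal diagram. Their equivalence, given by Theorem~\ref{thm:main}, is exactly the corollary.

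For intuition, the reverse direction is also immediate without the theorem. If $|\sigma(K)|/2$ crossing changes in some diagram unknot $K$, then $u(K)\le|\sigma(K)|/2$. The bound \eqref{eq:alt_sig_bound} gives the opposite inequality, so equality holds. There is no real obstacle here; the content lies entirely in Theorem~\ref{thm:main}.
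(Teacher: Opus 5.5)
Your proposal is correct and matches the paper, which states this corollary with \qed as the immediate $k=1$ case of Theorem~\ref{thm:main}, using the equivalence of (i) and (iii). Your checks are the ones needed: a knot is automatically non-split, and $\frac{|\sigma(K)|+k-1}{2}$ reduces to $|\sigma(K)|/2$ when $k=1$.
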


For a special alternating knot $K$, Corollary~\ref{cor:unknotting} gives an explicit, implementable procedure to decide whether $u(K)=|\sigma(K)|/2$. As described in Section~\ref{sec:calculations}, this allows us to compute $u(K)$ for many small examples in the knot tables, including several knots of crossing number $11$ and $12$ for which $u(K)$ was previously undetermined.

We remark that Theorem~\ref{thm:main} and Corollary~\ref{cor:unknotting} do not hold for all alternating knots and links. Indeed, there exist alternating knots $K$ with $u(K)=|\sigma(K)|/2$ such that this value cannot be realized by crossing changes in any alternating diagram \cite{Bleiler1984note, Bernhard1994unknotting}.

We note also the following corollary on the additivity of the unknotting number. An analogous result holds for links, but for simplicity we state it only for knots.
\begin{cor}\label{cor:additive}
Let $K$ be a special alternating knot with $K=K_1 \# K_2$ and $u(K)=|\sigma(K)|/2$. Then
\[\pushQED{\qed}
u(K_1 \# K_2)=u(K_1)+u(K_2). \qedhere
\]
\end{cor}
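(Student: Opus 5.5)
Corollary~\ref{cor:additive} should follow from Theorem~\ref{thm:main} together with two standard facts. First, the unknotting number is subadditive. Second, the signature is additive under connected sum. Since $u(K_1\#K_2)\le u(K_1)+u(K_2)$ holds for any knots, the content is the reverse inequality. The plan is to show that each summand also attains the signature bound.

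First we observe that $K_1$ and $K_2$ are themselves special alternating knots. A special alternating diagram for $K$ that realizes the connected sum decomposition can be cut along the decomposing circle to give special alternating diagrams of $K_1$ and $K_2$. Here the Seifert surface given by one checkerboard surface restricts to checkerboard Seifert surfaces on each side. To know that the decomposition is visible in an alternating diagram, we invoke Menasco's theorem: for a prime decomposition of an alternating knot, any reduced alternating diagram is visibly composite along a circle meeting it in two points. An arbitrary decomposition $K=K_1\#K_2$ is a grouping of prime factors, so it is also visible in the diagram.

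Next, by the lower bound \eqref{eq:alt_sig_bound} with $k=1$, we have $u(K_i)\ge |\sigma(K_i)|/2$. For special alternating knots of a fixed sign (both factors inherit the same sign from the special diagram of $K$), the signatures $\sigma(K_1)$ and $\sigma(K_2)$ have the same sign. Hence $|\sigma(K)|=|\sigma(K_1)|+|\sigma(K_2)|$. This gives
\[
\tfrac{|\sigma(K)|}{2}=u(K)\le u(K_1)+u(K_2)\ \text{ and }\ u(K_1)+u(K_2)\ge \tfrac{|\sigma(K_1)|}{2}+\tfrac{|\sigma(K_2)|}{2}=\tfrac{|\sigma(K)|}{2}.
\]
Neither inequality yet forces equality on its own, because subadditivity points the wrong way.

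The key step uses Theorem~\ref{thm:main}\eqref{it:crossing_changes}. Since $u(K)=|\sigma(K)|/2$, $K$ can be unknotted by $|\sigma(K)|/2$ crossing changes in the visibly composite alternating diagram $D=D_1\#D_2$. Let $a$ be the number of crossing changes made in $D_1$ and $b$ the number made in $D_2$, so $a+b=|\sigma(K)|/2$. The resulting diagram is a connected sum of the modified $D_1$ and $D_2$ and represents the unknot. By additivity of genus, the unknot has no nontrivial connected summands, so each modified $D_i$ represents the unknot. Hence $u(K_1)\le a$ and $u(K_2)\le b$, which gives $u(K_1)+u(K_2)\le |\sigma(K)|/2=u(K)$. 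Combined with subadditivity, this yields $u(K_1\#K_2)=u(K_1)+u(K_2)$.

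The main obstacle is justifying that the decomposition $K_1\#K_2$ appears visibly in some alternating diagram, which relies on Menasco's result as above. A smaller point is the sign compatibility of the signatures, but it is not essential: the final argument only uses the crossing-change count, never the signature of the summands.
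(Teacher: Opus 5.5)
Your proposal is correct and follows essentially the same route as the paper. You apply Theorem~\ref{thm:main}\eqref{it:crossing_changes} (equivalently Corollary~\ref{cor:unknotting}) to an alternating diagram that is visibly a connected sum $D_1\# D_2$, split the $|\sigma(K)|/2$ crossing changes between the two sides, and use genus additivity (a step the paper leaves implicit) to see that each side becomes unknotted, giving $u(K_1)+u(K_2)\le u(K)$.

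One small caution: when $K$ has three or more prime factors, a \emph{given} reduced alternating diagram need not exhibit an arbitrary grouping $K_1\# K_2$ along a single circle. This is harmless, because you can either cut along all of Menasco's decomposing circles and use subadditivity within each group, or choose $D$ to be an alternating connected sum of reduced alternating diagrams of $K_1$ and $K_2$ (which are alternating by Menasco), since Theorem~\ref{thm:main}\eqref{it:crossing_changes} applies to every alternating diagram.
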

This corollary is obtained by applying Corollary~\ref{cor:unknotting} to a reduced alternating diagram for $K$, and recalling that such a diagram is necessarily a connected sum of diagrams for $K_1$ and $K_2$ \cite{Menasco1984incompressible}.

Finally, we note that Theorem~\ref{thm:main} can be extended to a statement about split links. Since the unlinking number and the $4$-ball crossing number are additive under split union, we have the following.

\begin{cor}\label{cor:split}
Let $L$ be the split union of oriented non-split special alternating links $L_1,\dots,L_\ell$ and let
\[
m=\sum_{i=1}^\ell \frac{|\sigma(L_i)|+k_i-1}{2},
\]
where $k_i$ denotes the number of components of $L_i$. Then the following are equivalent:
\begin{enumerate}[(i)]
\item\label{it:unlinking_split} $u(L)=m$;
\item\label{it:c4_split} $c_4(L)=m$;
\item\label{it:crossing_changes_split} $L$ can be unlinked by $m$ crossing changes in any alternating or minimal diagram. \qed
\end{enumerate} 
\end{cor}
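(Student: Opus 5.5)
Corollary~\ref{cor:split} follows by reducing to Theorem~\ref{thm:main}, applied to each split component $L_i$. The tools are additivity of $u$ and $c_4$ under split union, and the general lower bound $c_4(L_i)\geq \frac{|\sigma(L_i)|+k_i-1}{2}$. I take this lower bound as known: it underlies \eqref{eq:alt_sig_bound}, holds for any non-split link, and is the form in which the signature bound is proved. Write $m_i=\frac{|\sigma(L_i)|+k_i-1}{2}$, so $m=\sum_i m_i$. For each $i$ we have the chain $u(L_i)\geq c_4(L_i)\geq m_i$, the first inequality because a sequence of crossing changes to an unlink yields an immersed union of disks with one double point per crossing change.

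\textbf{(i)$\Rightarrow$(ii)$\Rightarrow$(i).} By additivity, $u(L)=\sum_i u(L_i)\geq \sum_i c_4(L_i)=c_4(L)\geq m$.

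If $u(L)=m$, every inequality in this chain is an equality. Termwise, $c_4(L_i)=m_i$ for each $i$, so $c_4(L)=m$.

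Conversely, suppose $c_4(L)=m$. Since $c_4(L_i)\geq m_i$ for every $i$, we get $c_4(L_i)=m_i$ for each $i$. Theorem~\ref{thm:main} (ii)$\Rightarrow$(i) then gives $u(L_i)=m_i$, so $u(L)=m$ by additivity.

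\textbf{(iii)$\Rightarrow$(i).} Crossing changes in a diagram give an upper bound on $u(L)$, so (iii) yields $u(L)\leq m$. Combined with $u(L)\geq m$ from the chain above, $u(L)=m$.

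\textbf{(i)$\Rightarrow$(iii).} This is the only step needing real care. Take an alternating or minimal diagram $D$ of $L$. I must show that $D$ decomposes, after discarding nothing essential, into alternating or minimal diagrams $D_i$ of the $L_i$.

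In the alternating case, use that a connected alternating diagram represents a non-split link (Menasco). Each connected piece of $D$ therefore represents a non-split sublink. Since the $L_i$ are non-split, the split components of $L$ are exactly the $L_i$, by uniqueness of split decomposition. Hence the connected pieces of $D$ are alternating diagrams $D_i$ of the $L_i$, possibly nested in the plane. Nesting is harmless: crossing changes within one piece do not affect the others, and nested diagrams of unlinks form an unlink.

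In the minimal case, I first claim that a minimal diagram of $L$ has $c(L)=\sum_i c(L_i)$. The bound $\leq$ is clear by drawing minimal diagrams of the $L_i$ side by side. For $\geq$, crossings between distinct split components come in even numbers and can be removed by changes that do not alter the link type of each component. This step is the expected obstacle. The safest route is to note that the $L_i$ are alternating, hence adequate. Kauffman–Murasugi–Thistlethwaite then gives that the span of the Jones polynomial bounds crossing number, and the Jones polynomial is multiplicative under split union up to a factor $(-t^{1/2}-t^{-1/2})$. Together these show that a minimal diagram of $L$ has no crossings between different $L_i$ and restricts to minimal diagrams $D_i$ of each $L_i$.

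Given such diagrams $D_i$, Theorem~\ref{thm:main}(iii) applied to $L_i$, whose condition (i) holds since $u(L_i)=m_i$ was shown above, provides $m_i$ crossing changes in $D_i$ turning it into an unlink. Performing all of them in $D$ uses $m$ changes in total and yields a split union of unlinks, which is an unlink.
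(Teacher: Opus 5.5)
Your argument is correct and is the one the paper intends: additivity of $u$ and $c_4$ under split union reduces everything to Theorem~\ref{thm:main} applied to each $L_i$, and you spell out the diagram bookkeeping for (iii) that the paper leaves implicit. Two small corrections. First, the bound $c_4(L_i)\ge m_i$ comes from \eqref{eq:signature_bound} because $\eta(L_i)=0$ for non-split alternating $L_i$; it is false for general non-split links. Second, the minimal-diagram step needs no Jones polynomial, and your sketch would in any case need the KMT bound for disconnected diagrams, since here $\mathrm{span}\,V_L=c(L)+\ell-1>c(L)$. Deleting the components outside $L_i$ from any diagram of $L$ gives a diagram of $L_i$, so $c(L)=\sum_i c(L_i)$, and a minimal diagram of $L$ therefore has no crossings between distinct $L_i$ and restricts to minimal diagrams of the $L_i$.
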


% Lastly, note that if two knots $K_1$ and $K_2$ satisfy $u(K_i)=-\sigma(K_i)/2$ for $i=1,2$, then the classical signature bound, together with additivity of the signature and subadditivity of the unknotting number, implies that the unknotting number is additive for $K_1$ and $K_2$. Corollary~\ref{cor:unknotting} shows that the same conclusion still holds even when one of the knots satisfies $u(K_i)=(-\sigma(K_i)/2)+1$.

% \begin{cor}\label{cor:additive}
% Let  $K_1$ and $K_2$ be special alternating knots. If
% \[
% u(K_1)=-\frac{\sigma(K_1)}{2}
% \qquad\text{and}\qquad
% u(K_2)=-\frac{\sigma(K_2)}{2}+1,
% \]
% then
% \[\pushQED{\qed}
% u(K_1 \# K_2)=u(K_1)+u(K_2). \qedhere
% \]
% \end{cor}

 \subsection*{Structure of proof}
For any oriented $k$-component link $L$, there is a lower bound on $c_4(L)$ in terms of classical invariants:
\begin{equation}\label{eq:signature_bound}
    c_4(L)\geq \frac{|\sigma(L)|-\eta(L)+k-1}{2},
\end{equation}
where $\eta(L)$ denotes the nullity of $L$ \cite[Lemma 2.2]{NagelOwens}. Since the nullity $\eta(L)$ of a non-split alternating link is always zero, the implications \eqref{it:crossing_changes}$\Rightarrow$\eqref{it:unlinking} and \eqref{it:unlinking}$\Rightarrow$ \eqref{it:c4} in Theorem~\ref{thm:main} follow immediately from \eqref{eq:signature_bound} and the fact that $u(L)\geq c_4(L)$. 

Consequently, the content of Theorem~\ref{thm:main} is the implication \eqref{it:c4}$\Rightarrow$\eqref{it:crossing_changes}. This implication is established by using techniques from smooth 4-manifold topology. In particular, for alternating links we derive a lattice-theoretic obstruction to equality in \eqref{eq:signature_bound} using Donaldson's diagonalization theorem \cite{Donaldson:1987-1} (see Theorem~\ref{thm:lattice_obstruction} for the precise statement). Although similar obstructions have appeared previously in the literature \cite{NagelOwens, Feller2016twobridge, Owens2010slicing}, we obtain a particularly refined obstruction by exploiting the fact that one of the $4$-manifolds arising in the proof is spin (see Proposition~\ref{prop:spinNagelOwens}). The proof of Theorem~\ref{thm:main} is completed by analyzing this lattice-theoretic obstruction in the case of special alternating links. This analysis is carried out in Section~\ref{sec:proof}.

\subsection*{Acknowledgements}
DM is partially supported by NSERC grant RGPIN-2020-05491 and a Canada Research Chair. JP is partially supported by Samsung Science and Technology Foundation (SSTF-BA2102-02) and the NRF grant RS-2025-00542968. Both authors are grateful to Paolo Aceto for many helpful discussions, without which this paper would never been conceived.

\section{Obstructing minimal unlinking numbers}
The aim of this section is to derive the obstruction to an alternating link realizing equality in \eqref{eq:signature_bound}.
The following construction is a specialization of that of Nagel--Owens \cite[Proposition~2.3]{NagelOwens}, which was in turn inspired by Cochran--Lickorish \cite{CochranLickorish}. The key observation is that, in our setup, the construction yields a spin $4$-manifold. Throughout the article, we work in the smooth category.

\begin{prop}\label{prop:spinNagelOwens}
Let $L$ be an oriented $k$-component link in $S^3$ with
\begin{equation}\label{eq:c4sig_condition}
    c_4(L) = \frac{-\sigma(L)-\eta(L) +k-1}{2}.
\end{equation}
Then the double branched cover $\Sigma(L)$ bounds a
smooth, spin, negative-definite 4-manifold $W$ with
$b_2(W) = -\sigma(L)$.
Moreover $H_2(W; \Z)$ contains $c_4(L)$ pairwise disjoint spherical
classes of self-intersection $-2$.
\end{prop}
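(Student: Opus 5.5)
Take an immersed union of disks $\Delta \subset B^4$ with boundary $L$ and exactly $c=c_4(L)$ transverse double points, and resolve the singularities in the style of Cochran--Lickorish and Nagel--Owens. At each double point, blow up $B^4$ by connect-summing with $\CP^2$ or $\overline{\CP}^2$, chosen according to the sign of the double point. Then replace $\Delta$ near that point by an embedded surface of the form $\Delta$ minus two small disks, tubed to two copies of a line meeting the exceptional sphere. This produces an embedded union of disks $D$ in $X=B^4\#\,c\,(\pm\CP^2)$ whose homology class is $\sum 2e_i$ or $0$, depending on how the orientations of the local sheets are arranged. In the Nagel--Owens construction one can choose the blow-ups so that $[D]$ is divisible by $2$. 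The double branched cover $W$ of $X$ along $D$ then exists and has boundary $\Sigma(L)$.

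Compute $b_2(W)$ and $\sigma(W)$ by the standard formulas. The Euler characteristic gives $\chi(W)=2\chi(X)-\chi(D)$, with $\chi(D)=k$ and $\chi(X)=1+c$, so $b_2(W)=1+2c-k+(\text{correction from } b_1,b_3)$. The $G$-signature theorem gives $\sigma(W)=2\sigma(X)-\tfrac12[D]^2$. Following Nagel--Owens \cite[Proposition~2.3]{NagelOwens}, the hypothesis \eqref{eq:c4sig_condition} is exactly the equality case of the inequality \eqref{eq:signature_bound}. In that case all blow-ups must be with $\overline{\CP}^2$, i.e.\ the double points all have the same sign, and $W$ is negative definite with $b_2(W)=-\sigma(L)$. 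I would reproduce their bookkeeping: the inequality arises as $|\sigma(W)|\le b_2(W)$, and equality forces $b_2^+(W)=0$ and $b_2(W)=-\sigma(L)$, with the nullity term accounting for $b_1$ of the boundary.

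For the spin condition, use the fact that a double branched cover of $X$ along $D$ is spin when $[D]/2$ is characteristic-compatible. More concretely, I would arrange the resolution so that $D$ is null-homologous, pairing each double point appropriately or using the $2e_i$ versus $0$ choice. Then $W$ is spin because $w_2(W)$ is the pullback of $w_2(X)+\mathrm{PD}[D/2]$. Since $w_2(X)=\sum e_i$ mod $2$ is nonzero, we need $[D]/2\equiv\sum e_i$ mod $2$, i.e.\ $[D]=\sum 2e_i$. Each exceptional sphere $E_i$ meets $D$ in two points, so its preimage in $W$ is a sphere $\tilde E_i$ double covering $E_i$ branched at two points, with self-intersection $2\cdot(-1)=-2$. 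The $\tilde E_i$ are pairwise disjoint because the $E_i$ are. This gives $c_4(L)$ disjoint spherical classes of square $-2$.

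The main obstacle is the spin claim. I would need to verify carefully, first, that the local model at each double point can always be chosen so that $[D]=\sum 2e_i$ (with the correct signs relative to the orientation) while keeping the signature computation sharp. Second, I need the precise criterion for the branched cover to be spin, namely $w_2(X)\equiv [D]/2$ mod $2$ on $H_2(X)$, including relative-boundary subtleties. I would attempt the latter using the transfer and the formula $w_2(W)=\pi^*(w_2(X)+\mathrm{PD}(\tfrac12[D]))$ for double branched covers with even branch class.
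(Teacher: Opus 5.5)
Your plan is essentially the paper's proof: equality forces every double point to be positive (Nagel--Owens, Lemma~2.2), and each is blown up with $\overline{\CP}^2$. $W$ is then the double cover of $X=B^4\#\,p\,\overline{\CP}^2$ branched along the resulting disks $\Delta$, spinness comes from Nagami's criterion that $\mathrm{PD}([\Delta]/2)$ reduces mod $2$ to $w_2(X)$, and the remaining properties are quoted from Nagel--Owens. Your explicit description of the lifted exceptional spheres as disjoint $-2$ spheres is correct.

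The step you call the main obstacle is automatic, and you should drop the null-homologous alternative. At a positive double point blown up with $\overline{\CP}^2$, the two sheets must meet the exceptional sphere with the same sign, as for the proper transform of a node. So $[\Delta]=\sum \pm 2e_i$ and $[\Delta]/2\equiv \sum e_i \pmod 2$, with no choice involved. Class $0$ arises only from a $\CP^2$ blow-up, which fails the $w_2$ criterion and gives $b_2^+(W)>0$.

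Also, because $X$ has boundary, the signature formula must read $\sigma(W)=2\sigma(X)-\tfrac12[\Delta]^2+\sigma(L)$. As you wrote it, you would get $\sigma(W)=0$.
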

\begin{proof} Let $p=c_4(L)$. 
By \cite[Lemma~2.2]{NagelOwens}, we know that if \eqref{eq:c4sig_condition} holds, then $L$ bounds a properly immersed union of $k$ disks in $B^4$ with $p$ double points which are necessarily all positive. By blowing-up at each of these double points, we obtain $L$ as the boundary of a properly embedded collection of $k$ disjoint disks
\[
\Delta \hookrightarrow X:=B^4\#\left( \#_p \overline{\CP}^2\right),
\]
where $\Delta$ represents the homology class of two-times the spherical generator in each $\overline{\CP}^2$ summand. We take $W$ to be the double cover of $X$ branched along $\Delta$. Since $\Delta$ is oriented and the Poincaré dual of the homology class $[\Delta]/2$ reduces mod 2 to the second Stiefel-Whitney class of $X$, the manifold $W$ is spin \cite{NagamiSpincCovers}. Since this construction is precisely the one used in \cite[Proposition~2.3]{NagelOwens}, the remaining properties of $W$ follow from the arguments therein.
\end{proof}
Let $D$ be a reduced alternating diagram for a non-split link $L$. The diagram divides the plane into connected regions which may be coloured black and white in a chessboard manner. There are two possible choices of colouring, each giving an incidence number, $\mu(c)\in \{\pm 1\}$, at each crossing $c$ of $D$, as shown in Figure~\ref{fig:incidencenumber}.
 \begin{figure}[ht]
   \centering
%   %\def\svgwidth{\columnwidth}
   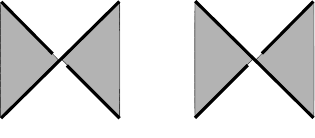
  \caption{The incidence number of a crossing.}
  \label{fig:incidencenumber}
 \end{figure}
 Since $D$ is alternating, we may fix the colouring so that $\mu(c)=-1$ for all crossings. We define the positive-definite Goeritz lattice of $D$, denoted $\Lambda_D$, as follows \cite[Chapter~9]{lickorish1997introduction}. Let $v_0, \dots, v_r$ be the white regions of the diagram $D$. Consider the free abelian group generated by $v_0, \dots, v_r$ with the symmetric bilinear pairing such that
 \[
 v_i \cdot v_j=\begin{cases}
        c_{ii}& \text{if $i= j$ and there are $c_{ii}$ crossings around $v_i$}\\
     -c_{ij} &\text{if $i\neq j$ and there are $c_{ij}$ crossings between $v_i$ and $v_j$}.
 \end{cases}
 \]
 This pairing is degenerate, but it descends to a positive-definite pairing on the quotient\footnote{This uses the fact that $D$ is non-split.}
\[
\Lambda_D=\frac{\Z\langle v_0, \dots, v_r \rangle}{\langle v_0+\dots + v_r\rangle}.
\]
We refer to $\Lambda_D$, equipped with this symmetric form, as the positive-definite Goeritz lattice of $D$. The lattice $\Lambda_D$ could be described more concisely, but less explicitly, as the Gordon-Litherland form on $H_1(S_+;\Z)$, where $S_+$ is the spanning surface obtained from the black regions in the checkerboard colouring \cite{GordonLitherland}. 

For simplicity, we refer to the standard positive-definite lattice $(\Z^n,\langle 1\rangle^n)$ as $\Z^n$. Our lattice-theoretic obstruction is the following.
\begin{thm}\label{thm:lattice_obstruction}
Let $L$ be a non-split $k$-component alternating link with $\sigma(L)\leq 0$. Let $D$ be a reduced alternating diagram for $L$ with positive-definite Goeritz lattice $\Lambda_D$. Let 
\[p=\frac{-\sigma(L)+k-1}{2} \quad\text{and}\quad n=\rk \Lambda_D-\sigma(L).\] If $c_4(L)=p$, then there is a lattice embedding of $\Lambda_D$ into $\Z^n$ such that the following conditions are satisfied:
\begin{enumerate}[(i)]
    \item\label{it:all_coords} for all unit vectors $e\in \Z^n$, there exists $v$ in the image of $\Lambda_D$ such that $v\cdot e\neq 0$ and
    \item\label{it:u_disjoint_supports} there exists an orthonormal basis $e_1,\dots, e_n$ for $\Z^n$ such that for all $i=1, \dots, p$, every $v$ in the image of $\Lambda_D$ satisfies $v\cdot e_{i}=v\cdot e_{i+p}$.
\end{enumerate}
\end{thm}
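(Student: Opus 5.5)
We combine Proposition~\ref{prop:spinNagelOwens} with the standard fact that the Goeritz lattice of an alternating diagram is the intersection lattice of a sharp definite filling of $\Sigma(L)$, and then apply Donaldson's theorem to the glued closed manifold. Since $D$ is alternating and non-split, the nullity is $\eta(L)=0$, so the hypothesis $c_4(L)=p$ is exactly condition \eqref{eq:c4sig_condition}. Proposition~\ref{prop:spinNagelOwens} then gives a smooth, spin, negative-definite $W$ with $\partial W=\Sigma(L)$ and $b_2(W)=-\sigma(L)$. Its $H_2(W)$ contains $p$ pairwise disjoint spheres $S_1,\dots,S_p$ with $S_i\cdot S_i=-2$; these are the lifts of the exceptional spheres of the $\overline{\CP}^2$ summands.

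On the other side, let $X_D$ be the double cover of $B^4$ branched over the pushed-in black surface $S_+$. By Gordon--Litherland, $X_D$ has intersection lattice $\Lambda_D$, which is positive definite, and $\partial X_D=\Sigma(L)$ up to orientation conventions. Since $\Sigma(L)$ is a rational homology sphere, we can form the closed smooth manifold
\[
Z=X_D\cup_{\Sigma(L)} (-W).
\]
It is positive definite with $b_2(Z)=\rk\Lambda_D-\sigma(L)=n$. Donaldson's theorem gives $H_2(Z)/\mathrm{tors}\cong\Z^n$, and restriction from the two pieces gives embeddings
\[
\Lambda_D\hookrightarrow\Z^n, \qquad Q_{-W}\hookrightarrow\Z^n.
\]
Moreover, $\Lambda_D$ lies in the orthogonal complement of the image of $H_2(-W)$. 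In $-W$ the classes $S_i$ are disjoint with square $+2$, so each $S_i$ maps to a vector of the form $\pm e_a\pm e_b$. Disjointness means these vectors are pairwise orthogonal. Two orthogonal square-2 vectors either have disjoint supports or share both coordinates, as with $e_a+e_b$ and $e_a-e_b$. I will exclude the shared-coordinate case using spin: if $e_a+e_b$ and $e_a-e_b$ both lay in the image of $H_2(-W)$, then so would their sum $2e_a$. Then $e_a$ would pair evenly with all of $H_2(-W)$, and I would need to derive a contradiction from $W$ being spin, which makes $Q_W$ even. This needs care, because only the glued lattice is unimodular. The cleaner route may be the following: the square-2 vector $(e_a+e_b)+(e_a-e_b)$ has square $4$, and a lattice argument in the even lattice $Q_{-W}$ sitting in $\Z^n$ should rule the configuration out. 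Granting this, after re-indexing and changing signs of basis vectors we get $S_i=e_i-e_{i+p}$ for $i=1,\dots,p$. Orthogonality of $\Lambda_D$ to each $S_i$ is then precisely condition \eqref{it:u_disjoint_supports}: every $v\in\Lambda_D$ satisfies $v\cdot e_i=v\cdot e_{i+p}$.

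For condition \eqref{it:all_coords}, suppose some unit vector $e$ is orthogonal to the image of $\Lambda_D$. The orthogonal complement of $\Lambda_D$ in $\Z^n$ should be (rationally) the image of $Q_{-W}$, which is even because $W$ is spin. Since $e$ lies in the orthogonal complement of $\Lambda_D$, this would place $e$ in that even lattice. I would make this rigorous by showing that $\Lambda_D^\perp$ is exactly the image of $H_2(-W)$: the gluing is along a rational homology sphere, and $\Lambda_D$ is primitive in $\Z^n$. Alternatively, the characteristic-covector argument works directly. Since $H_2(-W)$ is even, the class $\sum_j e_j$, which is characteristic for $\Z^n$, restricts to something characteristic on $\Lambda_D$ that is compatible with $W$ being spin. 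A unit vector orthogonal to $\Lambda_D$ would then give a second compatible restriction, contradicting the uniqueness of the spin structure on $\Sigma(L)$ extending over $W$. Either way, $e$ would be an odd-square vector in an even lattice, which is a contradiction.

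I expect the main obstacle to be the two spin-based parity arguments: controlling how the non-unimodular pieces $\Lambda_D$ and $Q_{-W}$ sit in $\Z^n$ in a way that excludes both the $e_a\pm e_b$ configuration and a unit vector orthogonal to $\Lambda_D$.
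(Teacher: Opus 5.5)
Your overall strategy is the paper's: the same $W$ from Proposition~\ref{prop:spinNagelOwens}, a $2$-handlebody $X_D$ with form $\Lambda_D$, and Donaldson's theorem applied to $Z$. However, the two steps you leave open are where the real content lies, and the fixes you suggest do not work.

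For (i) you need the \emph{integral} equality $\iota(\Lambda_D)^\perp=\iota(Q_{-W})$ in $\Z^n$. Neither rational equality nor primitivity of $\Lambda_D$ gives this. For example, in $\Z^2$ the lattice $\Lambda=\langle e_1\rangle$ is primitive, and $Q=\langle 2e_2\rangle$ is even with the same rational span as $\Lambda^\perp$, yet the unit vector $e_2$ lies in $\Lambda^\perp$. What you actually need is primitivity of $\iota(Q_{-W})$, not of $\iota(\Lambda_D)$. Your characteristic-covector sketch never specifies which two objects are forced to coincide, so it is not yet an argument; any correct version must use some integral information about $X_D$.

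The missing input is $H_1(X_D;\Z)=0$. This also holds for your branched-cover model, since the double cover of $B^4$ branched over a pushed-in connected surface is built from a $0$-handle and $2$-handles. With it, suppose $x\in H_2(Z)$ pairs trivially with $H_2(X_D)$. Its Poincar\'e dual then restricts to zero in $H^2(X_D)\cong \mathrm{Hom}(H_2(X_D),\Z)$. Hence it lifts to $H^2(Z,X_D)\cong H^2(W,\partial W)\cong H_2(W)$, i.e.\ $x$ comes from $-W$. This is the Aceto--McCoy--Park lemma the paper cites. Once you have it, $\iota(\Lambda_D)^\perp$ is even, so it contains no unit vectors, which is (i).

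For (ii) you do not need a separate spin argument. Asking how $e_a$ pairs with $H_2(-W)$ is not the relevant question. If $e_a+e_b$ and $e_a-e_b$ are both orthogonal to $\iota(\Lambda_D)$, then so is their sum $2e_a$, hence so is $e_a$, contradicting (i). So prove (i) first. It forces the $p$ norm-two classes to have pairwise disjoint supports, and from there your argument for (ii) (normalising to $e_i-e_{i+p}$ and reading off $v\cdot e_i=v\cdot e_{i+p}$) coincides with the paper's.
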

\begin{proof}
    Since $L$ is a non-split alternating link, we have $\eta(L)=0$.
There is a compact $4$-manifold $X_D$ with boundary $\partial X_D \cong \Sigma(L)$ and intersection form given by the positive-definite Goeritz form associated to $D$ \cite[Section~3]{ozsvath2005heegaard}. Moreover, since $X_D$ can be constructed without $1$-handles, we have $H_1(X_D;\Z)=0$.

Since we are assuming that
\[
c_4(L)=\frac{-\sigma(L)+k-1}{2},
\]
Proposition~\ref{prop:spinNagelOwens} yields a negative-definite spin $4$-manifold $W$ with $p$ pairwise disjoint spherical classes of self-intersection $-2$ such that $\partial W \cong \Sigma(L)$.

This allows us to form the closed smooth manifold
\[
Z=X_D\cup_{\Sigma(L)} -W
\]
with $b_2(Z)=\rk(\Lambda_D)-\sigma(L)=n$, where $-W$ denotes the manifold $W$ with reversed orientation.
Since $Z$ is smooth and positive-definite, its intersection form is diagonalizable \cite{Donaldson:1987-1}. Therefore, the inclusions
\[
X_D,\,-W \hookrightarrow Z
\]
induce a map of lattices
\[
\iota:\Lambda_D \oplus Q_{-W} \hookrightarrow \Z^{n},
\]
where $Q_{-W}$ is the integral lattice induced by the intersection form of $-W$.
Moreover, since $b_1(X_D)=0$, the lattice $Q_{-W}$ is isomorphic to the orthogonal complement of $\iota(\Lambda_D)$ in $\Z^{n}$ \cite[Lemma~2.4]{Aceto-McCoy-Park:2022}. Since $W$ is spin, its intersection form is even. Thus the orthogonal complement of $\iota(\Lambda_D)$ is even and contains $p$ pairwise orthogonal vectors of norm $2$. In particular, the orthogonal complement to $\iota(\Lambda_D)$ does not contain any vectors of norm one. Thus if $e$ is a unit vector in $\Z^{n}$, there is an element $v$ in $\iota(\Lambda_D)$ with $e\cdot v\neq 0$. This proves \eqref{it:all_coords}.

    It remains to establish \eqref{it:u_disjoint_supports}. First, let $u$ and $u'$ be a pair of orthogonal norm two vectors in $\Z^n$. Suppose that there exists a unit vector $e$ such that $u\cdot e\neq 0$ and $u'\cdot e\neq 0$. Up to replacing $u$ with $-u$ or $u'$ with $-u'$, we see that $u$ and $u'$ must take the form $u=e+f$ and $u'=e-f$, where $f$ is another unit vector orthogonal to $e$. Let $v$ be any vector that is orthogonal to both $u$ and $u'$. Since $u+u'=2e$, it follows that $v\cdot e=0$.
    
    Let $u_1,\dots,u_p$ be pairwise orthogonal vectors of norm two in the orthogonal complement of $\iota(\Lambda_D)$. Then for each $v$ in the image of $\Lambda_D$ we have $v\cdot u_i=0$. By \eqref{it:all_coords} and the preceding observation, given any distinct pair $u_i$ and $u_j$ there is no unit vector $e\in \Z^n$ such that $u_i\cdot e\neq 0$ and $u_j\cdot e\neq 0$. Thus we may choose an orthonormal basis $e_1,\dots,e_n$ for $\Z^n$ such that each $u_i$ has the form $u_i=e_i-e_{i+p}$. It follows that for any $v$ in the image of $\Lambda_D$,
\[
0=v\cdot u_i = v\cdot e_i - v\cdot e_{i+p},
\]
for all $i=1,\dots,p$. This establishes \eqref{it:u_disjoint_supports}.
\end{proof}
    
\begin{figure}
    \begin{overpic}[width=0.8\textwidth]{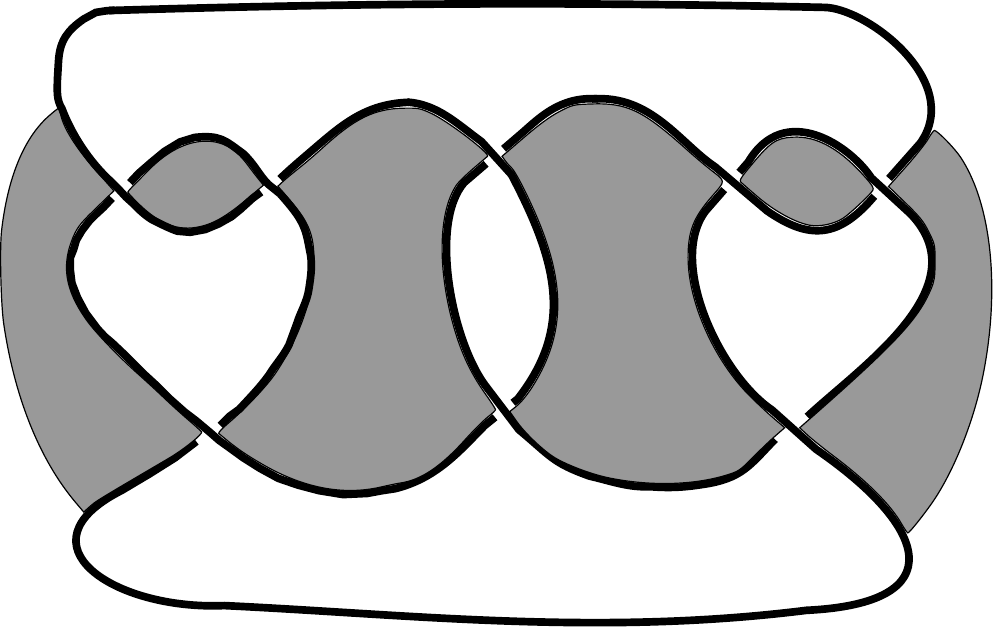}
      \put (10,35) {$\underline{e_1+e_3}+e_5$}
      \put (74,35) {$\underline{e_2+e_4}+e_6$}
      \put (49,37) {$e_7$}
      \put (48,32) {$+e_8$}
      \put (35,57) {$-\underline{e_1-e_3}-\underline{e_2-e_4}-e_8$}
      \put (43,8) {$-e_5-e_6-e_7$}
    \end{overpic}
  \caption{A diagram of the special alternating knot $8_{15}$, which has signature $-4$. The labellings in the white regions correspond to an embedding of the Goeritz form into $\Z^8$ satisfying the conclusions of Theorem~\ref{thm:lattice_obstruction}. Since $p=2$, we have $v\cdot e_1 = v\cdot e_3$ and $v\cdot e_2 = v\cdot e_4$ for each $v$ in the image of $\Lambda_D$. Note that $8_{15}$ can be unknotted by changing crossings between the regions labelled by the vectors $e_1$ and $e_2$.}
  \label{fig:8_15}
 \end{figure}

\section{Proof of Theorem~\ref{thm:main}}\label{sec:proof}
Let $L$ be a non-split special alternating link with $k$ components. As mentioned in the introduction, for any oriented link $L$ we have an inequality of the form \cite[Lemma~2.2]{NagelOwens}:
\begin{equation}\label{eq:u_c_4_sig_inequalities}
    u(L)\geq c_4(L)\geq \frac{|\sigma(L)|-\eta(L)+k-1}{2}.
\end{equation}
However, for a non-split alternating link we have $\eta(L)=0$. Thus \eqref{eq:u_c_4_sig_inequalities} yields the implications \eqref{it:crossing_changes}$\Rightarrow$\eqref{it:unlinking}$\Rightarrow$\eqref{it:c4}.

We complete the proof by establishing \eqref{it:c4}$\Rightarrow$\eqref{it:crossing_changes}. Let $p$ denote the quantity
\[
p=\frac{|\sigma(L)|+k-1}{2},
\]
and suppose that $c_4(L)=p$.

Let $D$ be a reduced alternating diagram for $L$ with $n$ crossings. Since $L$ is a special alternating link, the diagram $D$ contains only positive or negative crossings. Therefore, by mirroring if necessary, we may assume that $D$ is a positive diagram. Furthermore, by flyping if necessary, we may assume that $D$ is twist-reduced \cite[Section~1.1]{LackenbyPurcell}. \emph{Twist-reduced} means that for any pair of regions in a checkerboard colouring all crossings between these two regions lie in a single twist region.  

Colouring $D$ in a checkerboard fashion so that every crossing has incidence $\mu(c)=-1$, we obtain two connected spanning surfaces $S_-$ and $S_+$ for $L$, where $S_-$ is the white spanning surface and $S_+$ is the black spanning surface. Since every crossing in $D$ is positive, the white surface $S_-$ is a Seifert surface for $L$.
\begin{figure}
   \centering
%   %\def\svgwidth{\columnwidth}
   \includegraphics[width=0.15\textwidth]{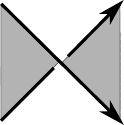}
  \caption{A shaded positive crossing}
  \label{fig:positive_crossing}
 \end{figure}
 Since the Gordon-Litherland pairing on $S_-$ is negative-definite, the signature of $L$ may be calculated as $\sigma(L)=-b_1(S_-)$. In particular, the Euler characteristic of $S_-$ satisfies
\begin{equation}\label{eq:chiS-}
\chi(S_-)=1+\sigma(L)=k-2p.
\end{equation}
Let $\Lambda_D$ be the positive-definite Goeritz form associated to $D$. We will use $v_0,\dots, v_r$ to denote the white regions of the checkerboard colouring and will identify them with their images in $\Lambda_D$. Since the Goeritz form $\Lambda_D$ is precisely the Gordon-Litherland pairing on $S_+$, we have $\rk \Lambda_D= b_1(S_+)=r$. Consequently,
\[
\rk \Lambda_D-\sigma(L)=b_1(S_+)+b_1(S_-)=n,
\]
where $n$ is the number of crossings of $D$.
Consider the embedding of $\Lambda_D$ into $\Z^{n}$ given by Theorem~\ref{thm:lattice_obstruction} and take $e_1, \dots, e_n$ to be an orthonormal basis satisfying Theorem~\ref{thm:lattice_obstruction}\eqref{it:all_coords} and \eqref{it:u_disjoint_supports}. To simplify notation,  we will identify $\Lambda_D$ with its image in $\Z^n$ under this embedding. Intuitively we may view each of the white regions $v_0,\dots, v_r$ of $D$ as being labelled by vectors in $\Z^n$ (see Figure~\ref{fig:8_15}, for example). The first step in the proof is to show that this labelling must be very simple.

\begin{claim}\label{claim1}
    For any basis vector $e_i\in \Z^n$, there are two regions $v_\alpha, v_\beta$ such that 
    \[v_\alpha \cdot e_i = +1,\quad v_\beta \cdot e_i=-1\] and for all other $j\neq \alpha,\beta$ we have $v_j\cdot e_i=0$.
\end{claim}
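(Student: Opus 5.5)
The plan is a counting argument: compare the total norm of the white-region vectors computed in two ways. One way uses the diagram, the other uses coordinates in $\Z^n$. Here $n$ denotes both the number of crossings of $D$ and the rank of the ambient lattice, as established just before the claim.

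\emph{Step 1 (the relation).} In $\Lambda_D$ we have $v_0+\dots+v_r=0$, so under the embedding of Theorem~\ref{thm:lattice_obstruction} the images satisfy $\sum_{j=0}^r v_j=0$ in $\Z^n$. Consequently, for each basis vector $e_i$ the integers $a_j:=v_j\cdot e_i$ satisfy $\sum_j a_j=0$.

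\emph{Step 2 (nonvanishing in every coordinate).} The $v_j$ generate $\Lambda_D$. Therefore condition \eqref{it:all_coords} of Theorem~\ref{thm:lattice_obstruction} gives, for each $i$, some $j$ with $v_j\cdot e_i\neq 0$. Combined with Step 1, the integers $a_j$ are not all zero and sum to zero. So there is at least one positive and at least one negative $a_j$, and
\[
\sum_{j=0}^r (v_j\cdot e_i)^2\ \geq\ 2 .
\]
Equality holds if and only if exactly one $a_j$ equals $+1$, exactly one equals $-1$, and all the others vanish.

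\emph{Step 3 (the norm count).} Summing the inequality of Step 2 over $i=1,\dots,n$ gives
\[
\sum_{j=0}^r v_j\cdot v_j=\sum_{i=1}^n\sum_{j=0}^r (v_j\cdot e_i)^2\ \geq\ 2n .
\]
On the other hand, $v_j\cdot v_j=c_{jj}$ is the number of crossings incident to the white region $v_j$. Because $D$ is reduced, no crossing has the same white region on both of its white sides. Hence every crossing is counted exactly twice, and $\sum_j v_j\cdot v_j=2n$.

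Thus equality must hold in Step 2 for every $i$, which is exactly the statement of the claim. The only step needing care is the diagrammatic count $\sum_j c_{jj}=2n$. It relies on the reducedness of $D$, since a nugatory crossing would touch a single white region twice. It also uses the fact that the rank of the ambient lattice, $\rk\Lambda_D-\sigma(L)$, was shown above to equal the crossing number of $D$. Everything else is formal.
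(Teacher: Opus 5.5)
Your proof is correct and follows the paper's argument. Both compare $\sum_j v_j\cdot v_j=2n$ against a per-coordinate lower bound of $2$, which comes from condition (i) of Theorem~\ref{thm:lattice_obstruction} together with $\sum_j v_j=0$, and then force equality in every coordinate. The only cosmetic differences are that you bound $\sum_j (v_j\cdot e_i)^2\geq 2$ directly, where the paper bounds $\sum_j |v_j\cdot e_i|$ by parity and uses $|x|^2\geq |x|$, and that you make explicit why reducedness gives $\sum_j c_{jj}=2n$.
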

\begin{proof}[Proof of Claim]
By Theorem~\ref{thm:lattice_obstruction}\eqref{it:all_coords}, the lattice $\Lambda_D$ uses every coordinate, and therefore
\begin{equation}\label{eq:1}
\sum_{j=0}^r \bigl|v_j\cdot e_i\bigr|>0
\end{equation}
for all $i=1,\dots,n$. On the other hand, the fact that $\sum_{j=0}^r v_j=0$ implies that the left-hand side of \eqref{eq:1} is even. Thus,
\begin{equation}\label{eq:2}
\sum_{j=0}^r \bigl|v_j\cdot e_i\bigr|\ge 2
\end{equation}
for all $i=1,\dots,n$. The number of crossings of $D$ can be computed via
\begin{equation}\label{eq:3}
2n=\sum_{j=0}^r v_j\cdot v_j
=\sum_{j=0}^r\sum_{i=1}^n \bigl|v_j\cdot e_i\bigr|^2
\ge \sum_{i=1}^n\sum_{j=0}^r \bigl|v_j\cdot e_i\bigr|.
\end{equation}
If we sum the inequalities in \eqref{eq:2} over all $i$ and compare with \eqref{eq:3}, then we obtain
\begin{equation}\label{eq:4}
\sum_{j=0}^r \bigl|v_j\cdot e_i\bigr|=2
\end{equation}
for all $i=1,\dots,n$. Invoking once again that $\sum_{j=0}^r v_j=0$, the claim follows from \eqref{eq:4}.
\end{proof}
Crucially, Claim~\ref{claim1} implies that for any distinct pair of regions $v_\alpha$ and $v_\beta$, the number of crossings between $v_\alpha$ and $v_\beta$ is given by
\[
-v_\alpha \cdot v_\beta
=
\bigl|\{\,e_i \mid v_\alpha \cdot e_i \neq 0 \text{ and } v_\beta \cdot e_i \neq 0\,\}\bigr|.
\]\vspace{-1.2\baselineskip}
\begin{claim}\label{claim2}
The diagram $D$ contains at least $p$ disjoint clasps between white regions.
\end{claim}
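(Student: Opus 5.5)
Claim~\ref{claim1} together with Theorem~\ref{thm:lattice_obstruction}\eqref{it:u_disjoint_supports} should give the clasps directly. Here a \emph{clasp} between white regions means a pair of crossings joining the same two white regions $v_\alpha$ and $v_\beta$. Since $D$ is twist-reduced, these two crossings are consecutive in a single twist region. Two clasps are \emph{disjoint} if they share no crossings.

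The first step is to set up a bijection between basis vectors and crossings. By Claim~\ref{claim1}, each basis vector $e_i$ is supported on exactly two white regions $v_\alpha,v_\beta$, with coefficients $+1$ and $-1$. The displayed formula after Claim~\ref{claim1} says that the number of crossings between $v_\alpha$ and $v_\beta$ equals the number of basis vectors supported on both of them. So we may choose a bijection between $\{e_1,\dots,e_n\}$ and the crossings of $D$ under which $e_i$ corresponds to a crossing between the two regions supporting $e_i$. This is consistent with the count $\rk\Lambda_D-\sigma(L)=n$.

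The second step uses condition \eqref{it:u_disjoint_supports}. For each $i=1,\dots,p$, every $v\in\Lambda_D$ satisfies $v\cdot e_i=v\cdot e_{i+p}$. Applying this to the white regions, $e_i$ and $e_{i+p}$ take the same values on every region $v_j$, so they are supported on the same pair $v_\alpha,v_\beta$, with the same signs. Hence $e_i$ and $e_{i+p}$ correspond to two distinct crossings between the same pair of white regions. This gives $p$ pairs of crossings, each pair lying between a common pair of white regions. The pairs are disjoint because the indices $1,\dots,2p$ are distinct and the correspondence between basis vectors and crossings is a bijection.

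The third step, which I expect to be the main subtlety, is checking that each such pair is really a clasp. If $v_\alpha$ and $v_\beta$ share at least two crossings, twist-reducedness puts all crossings between them in one twist region. In a reduced alternating diagram those crossings then form a twist region of length at least two, so any two of them bound a bigon-chain between white regions. I would argue that within one twist region the crossings can be grouped into adjacent pairs. Several pairs $(e_i,e_{i+p})$ may land on the same twist region, but we only need to choose disjoint adjacent pairs. This works because there are at least as many crossings in the region as basis vectors from the pairs, and we can reassign the bijection within that twist region. So the real work is the counting argument that matching pairs can be rearranged into consecutive crossings; the other two steps are direct. This yields at least $p$ disjoint clasps.
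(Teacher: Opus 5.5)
Your proposal is correct and is essentially the paper's argument: Theorem~\ref{thm:lattice_obstruction}\eqref{it:u_disjoint_supports} forces $e_i$ and $e_{i+p}$ to mark the same pair of white regions, so a pair marked by $k$ of $e_1,\dots,e_p$ has at least $2k$ crossings, twist-reducedness puts these in one twist region, and $k$ disjoint clasps of adjacent crossings are chosen there. Two small points: the explicit bijection between basis vectors and crossings is unnecessary, since the count suffices; and your opening assertion that any two crossings joining the same white regions are consecutive is inaccurate, though your third step handles this correctly by selecting adjacent pairs.
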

\begin{proof}[Proof of Claim]
For a basis vector $e_i$, we refer to the pair of white regions $v_\alpha$ and $v_\beta$ satisfying $v_\alpha\cdot e_i\neq 0$ and $v_\beta\cdot e_i\neq 0$ as the regions \emph{marked} by $e_i$. In particular, the number of crossings between two distinct regions $v_\alpha$ and $v_\beta$ is exactly the number of basis vectors that mark the pair $v_\alpha$ and $v_\beta$. By Theorem~\ref{thm:lattice_obstruction}\eqref{it:u_disjoint_supports}, if the pair $v_\alpha$, $v_\beta$ is marked by $e_i$ for some $1\le i\le p$, then it is also marked by $e_{i+p}$. Hence, if a pair of regions $v_\alpha$, $v_\beta$ is marked by $k\ge 1$ of the vectors $e_1,\dots,e_p$, then there are at least $2k$ crossings between $v_\alpha$ and $v_\beta$. Since $D$ is twist reduced, these $2k$ crossings lie in a single twist region. Thus we may select $k$ disjoint clasps from the twist region between $v_\alpha$ and $v_\beta$. Considering all pairs of regions marked by at least one of $e_1,\dots,e_p$, we obtain the required $p$ disjoint clasps.
\end{proof}

Let $L'$ be the link obtained by changing one crossing from each of the $p$ disjoint clasps between white regions in $D$.
\begin{claim}
$L'$ is the unlink on $k$ components.
\end{claim}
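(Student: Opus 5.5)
The plan is an Euler characteristic argument on the white Seifert surface. Modify $S_-$ locally at each of the $p$ clasps to get a Seifert surface $S'$ for $L'$. Compute that $\chi(S')=k$. Conclude that $S'$ is a disjoint union of $k$ embedded disks, so that $L'$ is the unlink.

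\textbf{Step 1: the local move at one clasp.} Recall that $S_-$ is built from the white regions $v_0,\dots,v_r$, viewed as disks, joined by one half-twisted band at each of the $n$ crossings of $D$. Consider one of the $p$ disjoint clasps. It consists of two adjacent crossings in the twist region between white regions $v_\alpha$ and $v_\beta$. Changing one of its two crossings makes the pair of crossings non-alternating, so a Reidemeister II move removes both. I will check in the local picture that the resulting tangle is exactly the resolution of both crossings in which $v_\alpha$ and $v_\beta$ are no longer joined there. In other words, it is what one gets from $S_-$ by deleting the two corresponding bands. The clasps are pairwise disjoint, so these local moves do not interfere with one another. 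Performing all $p$ of them produces a diagram $D'$ of $L'$ with $n-2p$ crossings. The surface $S'$ obtained from $S_-$ by deleting the $2p$ bands is still an embedded, oriented surface, and its boundary is $L'$. Checking this local picture carefully, including that the orientations of the remaining bands still match so that $S'$ is a Seifert surface, is the step I expect to need the most care, though it should be a direct inspection.

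\textbf{Step 2: the Euler characteristic count.} The surface $S_-$ has $r+1$ disks and $n$ bands, so $\chi(S_-)=r+1-n$. By \eqref{eq:chiS-}, $\chi(S_-)=k-2p$. Deleting a band raises $\chi$ by one, so
\[
\chi(S')=k-2p+2p=k.
\]
A crossing change does not change the number of components, so $L'$ has $k$ components and $S'$ has exactly $k$ boundary circles. Let the components of $S'$ be $F_1,\dots,F_c$, with genus $g_j$ and $b_j\ge 1$ boundary circles. Then $\sum_j b_j=k$, so $c\le k$. We also have
\[
k=\chi(S')=\sum_{j=1}^c (2-2g_j-b_j)=\sum_{j=1}^c(2-2g_j)-k,
\]
so $\sum_j(2-2g_j)=2k$. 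Each summand is at most $2$ and $c\le k$, so this forces $c=k$ and $g_j=0$ for all $j$. Then $b_j=1$ for all $j$, so every $F_j$ is a disk.

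\textbf{Step 3: conclusion.} The link $L'$ bounds $k$ pairwise disjoint embedded disks in $S^3$, namely the components of $S'$. Hence $L'$ is the unlink on $k$ components. The clasps lie in $D$ and we changed $p=\frac{|\sigma(L)|+k-1}{2}$ crossings of $D$, so the desired unlinking is realized in the twist-reduced reduced alternating diagram $D$. Since flypes carry the chosen crossings along, the same count can be realized in any alternating diagram. Minimal diagrams are presumably handled separately in the paper via their connected-sum structure.
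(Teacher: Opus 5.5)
Your proposal is correct and takes essentially the same approach as the paper. In both, changing a crossing in each clasp and then doing a Reidemeister~II move yields the boundary of $S_-$ with the two corresponding bands deleted, and then $\chi(S')=\chi(S_-)+2p=k$, together with $k$ boundary circles and no closed components, forces $S'$ to be $k$ disks. Your Step~2 simply spells out the surface-classification count that the paper states in one line, and the orientation check you flag in Step~1 is automatic, since $S'$ is a subsurface of the oriented surface $S_-$.
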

\begin{proof}[Proof of Claim]
If we change a crossing in a clasp between white regions of $D$, then the resulting link is isotopic, by a Reidemeister~II move, to a link $\wt{L}$ which bounds a surface $\wt{S}$ obtained from $S_-$ by deleting two bands (one for each crossing in the clasp). See Figure~\ref{fig:band_removal}. Thus, if we perform $p$ crossing changes, each in a disjoint clasp, then the resulting link $L'$ bounds a surface $S'$ obtained from $S_-$ by deleting $2p$ bands. Using \eqref{eq:chiS-} to compute the Euler characteristic of $S'$, we obtain
\[
\chi(S')=\chi(S_-)+2p=k.
\]
Since $S'$ has $k$ boundary components and no closed components, it follows that $S'$ is a union of $k$ disks. In particular, $L'$ is the $k$-component unlink.
\end{proof}

\begin{figure}
   \centering
%   %\def\svgwidth{\columnwidth}
   \includegraphics[width=0.7\textwidth]{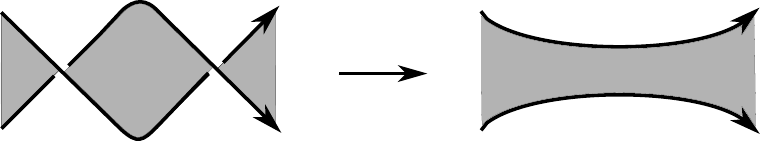}
  \caption{Changing a crossing in a clasp corresponds to the deletion of two bands from the white surface $S_-$.}
  \label{fig:band_removal}
 \end{figure}
 
Thus we have established that the alternating diagram $D$ can be unlinked by $p$ crossing changes. Since crossing changes commute with flypes, and any two reduced alternating diagrams are related by a sequence of flypes \cite{Menasco1993flyping}, it follows that any reduced alternating diagram for $L$ can be unlinked by $p$ crossing changes. Since every alternating diagram of a link can be converted to a reduced alternating diagram after removing nugatory crossings, it follows that every alternating diagram for $L$ can be unlinked by $p$ crossing changes.

Finally, we deduce that any minimal diagram for $L$ can be unlinked by changing $p$ crossings. Suppose that $L$ can be decomposed as a connected sum $L=L_1\# \dots \# L_\ell$, where each $L_i$ is prime. Since non-prime alternating links do not admit prime alternating diagrams \cite{Menasco1984incompressible}, it follows that each $L_i$ is itself alternating. Moreover, since any alternating diagram for $L$ can be unlinked by $p$ crossing changes, it follows that any reduced alternating diagram $D_i$ for $L_i$ can be unlinked by $p_i$ crossing changes\footnote{In fact, one can verify that $p_i=\frac{|\sigma(L_i)|+k_i-1}{2}$, where $L_i$ has $k_i$ components.}, where   
\[
p=p_1+\dots +p_\ell.
\]
Since every minimal diagram for $L$ is a connected sum of reduced alternating diagrams for the $L_i$ \cite{Murasugi1987Jones}, it follows that any minimal diagram for $L$ can be unknotted by $p$ crossing changes, as required.

This completes the proof of \eqref{it:c4}$\Rightarrow$\eqref{it:crossing_changes}.

\section{Applications to knots with small crossing number}\label{sec:calculations}
The unknotting number and the $4$-ball crossing number of special alternating knots with crossing number at most $9$ have all been determined and are recorded in \cite{knotinfo}.  For special alternating knots with crossing number $10$, the unknotting number is also known and recorded in \cite{knotinfo} and the $4$-ball crossing number is known to coincide with the unknotting number \cite{Owens2016Immersed}. With the exception of $9_{35}$ which has $u(K)=3$, $c_4(K)=2$ and $\sigma(K)=-2$ \cite{Owens2016Immersed}, Theorem~\ref{thm:main} is sufficient to calculate the unknotting number and $4$-ball crossing number of every special alternating knot with at most $10$ crossings.

\subsection{Special alternating knots with crossing number 11}
 Of the 57 prime special alternating knots with crossing number 11, there are 16 for which the unknotting number is currently unknown \cite{knotinfo}:
 \begin{align}\begin{split}\label{list1}
&11a291,\, 11a298,\, 11a299,\, 11a319,\, 11a320,\, 11a329,\, 11a336,\, 11a340,\\
&11a353,\, 11a354,\, 11a356,\, 11a357,\, 11a361,\, 11a362,\, 11a363,\, 11a366.
\end{split}\end{align}
The results of this paper are sufficient to calculate the unknotting number for 11 of these knots. For all 16 of the knots in \eqref{list1} one can verify that they cannot be unknotted by $|\sigma(K)|/2$ crossing changes in an alternating diagram. Thus, by Theorem~\ref{thm:main}, each of these knots satisfies
\[
u(K)\ge c_4(K)\ge \frac{|\sigma(K)|}{2}+1.
\]
 On the other hand, the following 11 knots can be unknotted by $\frac{|\sigma(K)|}{2} +1 $ crossing changes 
 \begin{align}\begin{split}\label{list2}
&11a291,\, 11a298,\, 11a299,\, 11a319,\, 11a320,\, 11a329,\\
&11a336,\, 11a340,\, 11a353,\, 11a356,\, 11a357.
\end{split}\end{align}
 Thus the equality $u(K)=c_4(K)=\frac{|\sigma(K)|}{2}+1$ holds for the knots in \eqref{list2}.

Of the remaining five knots, $11a354$, $11a361$ and $11a366$ all have $\sigma(K)=-4$ and are related to $9_{35}$ by a single crossing change. Since $c_4(9_{35})=2$, this shows $11a354$, $11a361$ and $11a366$ satisfy $c_4(K)=3$.

Table~\ref{tab:newvalues} summarizes the results of these calculations, where $g$ denotes the Seifert genus.

\begin{table}[ht]
\vspace{.2cm}
    \centering
    \begin{tabular}{|c|Y|Y|Y|Y|}
        \hline
        $K$ & $u$ & $c_4$ & $\sigma$ & $g$ \\ \hline
        $11a291$ & 4 & 4 & -6 & 3\\
        $11a298$ & 4 & 4 & -6 & 3\\
        $11a299$ & 3 & 3 & -4 & 2 \\
        $11a319$ & 4 & 4 & -6 & 3\\
        $11a320$ & 3 & 3 & -4 & 2\\
        $11a329$ & 3 & 3 & -4 & 2\\
        $11a336$ & 4 & 4 & -6 & 3\\
        $11a340$ & 4 & 4 & -6 & 3\\
        $11a353$ & 4 & 4 & -6 & 3\\
        $11a354$ & $\{3,4\}$ & 3 & -4 & 2\\
        $11a356$ & 4 & 4 & -6 & 3\\
        $11a357$ & 4 & 4 & -6 & 3\\
        $11a361$ & $\{3,4\}$ & 3 & -4 & 2\\
        $11a362$ & $\{2,3\}$ & $\{2,3\}$ & -2 & 1\\
        $11a363$ & $\{2,3\}$ & $\{2,3\}$ & -2 & 1\\
        $11a366$ & $\{3,4\}$ & 3 & -4 & 2\\ \hline
    \end{tabular}
    \vspace{.4cm}
    \caption{Special alternating knots with crossing number $11$ and previously or currently unknown unknotting number.}
    \label{tab:newvalues}
\end{table}
\vspace{-.7cm}

\subsection{Special alternating knots with crossing number 12}
We repeat the same analysis for special alternating knots with crossing number $12$.
Among the prime special alternating $12$-crossing knots, there are $35$ for which the unknotting number is currently unknown \cite{knotinfo}:
\begin{align}\begin{split}\label{eq:unknown12}
&12a94,\, 12a97,\, 12a102,\, 12a107,\, 12a144,\, 12a145,\, 12a152,\\
&12a156,\, 12a319,\, 12a320,\, 12a368,\, 12a391,\, 12a392,\, 12a421,\\
&12a431,\, 12a443,\, 12a586,\, 12a610,\, 12a653,\, 12a659,\, 12a814,\\
&12a828,\, 12a877,\, 12a880,\, 12a900,\, 12a973,\, 12a974,\, 12a995,\\
&12a996,\, 12a1004,\, 12a1035,\, 12a1037,\, 12a1097,\, 12a1112,\, 12a1113.
\end{split}\end{align}
As in the crossing number $11$ case, we may verify that none of the knots in \eqref{eq:unknown12} can be unknotted in an alternating diagram by $\lvert\sigma(K)\rvert/2$ crossing changes.
Therefore, Theorem~\ref{thm:main} implies that for each of these knots we have
\[
u(K)\ge c_4(K)\ge \frac{\lvert\sigma(K)\rvert}{2}+1.
\]
On the other hand, the following $30$ knots can be unknotted by $\frac{|\sigma(K)|}{2}+1$ crossing changes:
\begin{align}\begin{split}\label{list12}
&12a94,\, 12a97,\, 12a102,\, 12a107,\, 12a144,\, 12a145,\, 12a152,\, 12a319,\\
&12a320,\, 12a368,\, 12a391,\, 12a421,\, 12a431,\, 12a443,\, 12a586,\, 12a610,\\
&12a653,\, 12a659,\, 12a814,\, 12a828,\, 12a877,\, 12a880,\, 12a900,\, 12a973,\\
&12a974,\, 12a995,\, 12a996,\, 12a1004,\, 12a1035,\, 12a1112.
\end{split}\end{align}
Thus the equality $u(K)=c_4(K)=\frac{|\sigma(K)|}{2}+1$ holds for the knots in \eqref{list12}.

For the remaining five knots, the knots $12a156$ and $12a392$ have $\sigma(K)=-4$ and are related to $9_{35}$ by a single crossing change. Again, since $c_4(9_{35})=2$, it follows that $12a156$ and $12a392$ satisfy $c_4(K)=3$.

Table~\ref{tab:newvalues12} summarizes the results of these calculations.

\begin{table}[ht]
\vspace{.2cm}
\centering
\begin{tabular}{|c|Y|Y|Y|Y|}
\hline
\(K\) & \(u\) & \(c_4\) & \(\sigma\) & \(g\) \\ \hline
\(12a94\)   & 4 & 4 & -6 & 3\\
\(12a97\)   & 3 & 3 & -4 & 2\\
\(12a102\)  & 4 & 4 & -6 & 3\\
\(12a107\)  & 4 & 4 & -6 & 3\\
\(12a144\)  & 4 & 4 & -6 & 3\\
\(12a145\)  & 4 & 4 & -6 & 3\\
\(12a152\)  & 3 & 3 & -4 & 2\\
\(12a156\)  & \(\{3,4\}\) & 3 & -4 & 2\\
\(12a319\)  & 4 & 4 & -6 & 3\\
\(12a320\)  & 3 & 3 & -4 & 2\\
\(12a368\)  & 4 & 4 & -6 & 3\\
\(12a391\)  & 4 & 4 & -6 & 3\\
\(12a392\)  & \(\{3,4\}\) & 3 & -4 & 2\\
\(12a421\)  & 3 & 3 & -4 & 2\\
\(12a431\)  & 4 & 4 & -6 & 3\\
\(12a443\)  & 3 & 3 & -4 & 2\\
\(12a586\)  & 4 & 4 & -6 & 3\\
\(12a610\)  & 3 & 3 & -4 & 2\\
\(12a653\)  & 3 & 3 & -4 & 2\\
\(12a659\)  & 4 & 4 & -6 & 3\\
\(12a814\)  & 4 & 4 & -6 & 3\\
\(12a828\)  & 4 & 4 & -6 & 3\\
\(12a877\)  & 4 & 4 & -6 & 3\\
\(12a880\)  & 3 & 3 & -4 & 2\\
\(12a900\)  & 4 & 4 & -6 & 3\\
\(12a973\)  & 4 & 4 & -6 & 3\\
\(12a974\)  & 3 & 3 & -4 & 2\\
\(12a995\)  & 4 & 4 & -6 & 3\\
\(12a996\)  & 3 & 3 & -4 & 2\\
\(12a1004\) & 4 & 4 & -6 & 3\\
\(12a1035\) & 4 & 4 & -6 & 3\\
\(12a1037\) & \(\{3,4\}\) & \(\{3,4\}\) & -4 & 2\\
\(12a1097\) & \(\{3,4\}\) & \(\{3,4\}\) & -4 & 2\\
\(12a1112\) & 4 & 4 & -6 & 3\\
\(12a1113\) & \(\{3,4\}\) & \(\{3,4\}\) & -4 & 2\\ \hline
\end{tabular}
\vspace{.4cm}
\caption{Special alternating knots with crossing number \(12\) and previously or currently unknown unknotting number.}
\label{tab:newvalues12}
\end{table}
\vspace{-.7cm}

%\newpage
\let\MRhref\undefined
\bibliographystyle{hamsalpha}
\bibliography{bib}

\end{document}